\newcommand{\de}{\partial}
\newcommand{\db}{\overline{\partial}}
\newcommand{\mn}{\sqrt{-1}}
\newcommand{\ve}{\varepsilon}
\newcommand{\vp}{\varphi}
\newcommand{\Ric}{\mathrm{Ric}}
\newcommand{\ov}[1]{\overline{#1}}
\newcommand{\tr}[2]{\textrm{tr}_{#1} #2}
\begin{document}
\newcounter{remark}
\newcounter{theor}
\setcounter{remark}{0}
\setcounter{theor}{1}
\newtheorem{claim}{Claim}
\newtheorem{theorem}{Theorem}[section]
\newtheorem{proposition}{Proposition}[section]
\newtheorem{conjecture}{Conjecture}[section]
\newtheorem{question}{Question}[section]
\newtheorem{lemma}{Lemma}[section]
\newtheorem{defn}{Definition}[theor]
\newtheorem{corollary}{Corollary}[section]
\newenvironment{proof}[1][Proof.]{\begin{trivlist}
\item[\hskip \labelsep {\itshape #1}]}{\hfill$\square$\medskip\end{trivlist}}
\newenvironment{remark}[1][Remark]{\addtocounter{remark}{1} \begin{trivlist}
\item[\hskip
\labelsep {\bfseries #1  \thesection.\theremark}]}{\end{trivlist}}
\setlength{\arraycolsep}{2pt}
\centerline{\bf \large The complex Monge-Amp\`ere equation}
\smallskip

\centerline{\bf \large on compact Hermitian manifolds\footnote{Research supported in part by National Science Foundation grant DMS-08-48193.  The second-named author is also supported in part by a Sloan Foundation fellowship.}}

\bigskip
\bigskip
\centerline{\bf Valentino Tosatti and Ben Weinkove}

\bigskip

\begin{abstract}  We show that, up to scaling, the complex Monge-Amp\`ere equation on compact Hermitian manifolds always admits a smooth solution.
\end{abstract}

\bigskip

\section{Introduction} \label{intro}

Let $(M, g)$ be a compact Hermitian manifold of complex dimension $n \ge 2$ and write $\omega$ for the corresponding real $(1,1)$ form
$$\omega = \sqrt{-1} \sum_{i,j} g_{i \ov{j}} dz^i \wedge d\ov{z^j}.$$  For a smooth real-valued function $F$ on $M$, consider the complex Monge-Amp\`ere equation 
\begin{equation} \begin{split} \label{ma}
&(\omega + \sqrt{-1} \partial \ov{\partial} \varphi )^n   = e^F \omega^n, \ \ \textrm{with} \\
 & \omega + \sqrt{-1} \partial \ov{\partial} \varphi>0 , \quad \sup_M \varphi =0,
\end{split} \end{equation}
for a real-valued function $\varphi$.

Our main result is as follows.

\bigskip
\noindent
{\bf Main Theorem} \,  \emph{Let $\varphi$ be a smooth solution of the complex Monge-Amp\`ere equation (\ref{ma}).  Then there are uniform $C^{\infty}$ a priori estimates on $\varphi$ depending only on $(M, \omega)$ and $F$.}
\bigskip

A corollary of this is that we can solve (\ref{ma}) uniquely after adding a constant to $F$, or equivalently, up to scaling the volume form $e^F \omega^n$.

\bigskip
\noindent
{\bf Corollary 1.} \, \emph{For every smooth real-valued function $F$ on $M$ there exists a unique real number $b$ and a unique smooth real-valued function $\varphi$ on $M$ solving
\begin{equation} \begin{split} \label{ma2}
&(\omega + \sqrt{-1} \partial \ov{\partial} \varphi )^n   = e^{F+b} \omega^n, \ \ \textrm{with} \\
 & \omega + \sqrt{-1} \partial \ov{\partial} \varphi>0 , \quad \sup_M \varphi =0.
\end{split} \end{equation}
}

In the  case of $\omega$ K\"ahler, that is when $d \omega=0$, this result is precisely the celebrated Calabi Conjecture \cite{C} proved by Yau \cite{Y}. We note here that if $\omega$ satisfies 
\begin{equation} \label{gl}
\de\db\omega^k=0, \quad \textrm{for } k=1,2,
\end{equation}
 (in particular if $\omega$ is closed) then the constant $b$ must equal
$$\log \frac{\int_M\omega^n}{\int_M e^F\omega^n}.$$

We mention now some special cases where the results of the Main Theorem and Corollary are already known.  Cherrier \cite{ChP} gave a proof when the complex dimension is two or if $\omega$ is balanced, that is, $d(\omega^{n-1})=0$ (an alternative proof was very recently given in \cite{TW}).   In addition, Cherrier \cite{ChP} dealt with the case of conformally K\"ahler and considered a technical assumption which is slightly weaker than balanced, 
see also the related work of Hanani \cite{Ha}.   Guan-Li \cite{GL} gave a proof under the assumption (\ref{gl}).  For further background we refer the reader to \cite{TW} and the references therein.

As the reader will see in the proof below, we note that  the key $L^{\infty}$ bound of $\varphi$ in the Main Theorem follows from combining a lemma of \cite{ChP} with  some recent estimates  of the authors \cite{TW}.

Finally, we remark that one can give a geometric interpretation of \eqref{ma2} in terms of the first Chern class $c_1(M)$ of $M$. We denote by $\Ric(\omega)$ the first Chern form of the Chern connection of $\omega$, which is a closed form cohomologous to $c_1(M)$. We then consider the real Bott-Chern space $H^{1,1}_{\mathrm{BC}}(X,\mathbb{R})$ of closed real $(1,1)$ forms modulo the image of $\mn\de\db$ acting on real functions. It has a natural surjection to the familiar space $H^{1,1}(M,\mathbb{R})$, which is an isomorphism if and only if $b_1(M)=2h^{0,1}$ \cite{Ga2} (in particular if $M$ is K\"ahler). The form $\Ric(\omega)$ determines a class $c_1^{\mathrm{BC}}(M)$ in  $H^{1,1}_{\mathrm{BC}}(M,\mathbb{R})$ which maps to the usual first Chern class $c_1(M)$ via the above surjection. Then from our main theorem we get the following Hermitian version of the Calabi conjecture (see also a related question of Gauduchon \cite[IV.5]{Ga2}):

\bigskip
\noindent
{\bf Corollary 2.} \, \emph{Every representative of the first Bott-Chern class $c_1^{\mathrm{BC}}(M)$ can be represented as the first Chern form of a Hermitian metric of the form $\omega+\mn\de\db\vp$.}\\

To see why this holds, just notice that \eqref{ma2} holds for some constant $b$ if and only if
\begin{equation}\label{ma3}
\Ric(\omega+\mn\de\db\vp)=\Ric(\omega)-\frac{\mn}{2\pi}\de\db F,
\end{equation}
and that by definition every form representing $c_1^{\mathrm{BC}}(M)$
can be written as $\Ric(\omega)-\frac{\mn}{2\pi}\de\db F$ for some function $F$.  We note here that in the case $n=2$ \cite[Corollary 2]{TW} gives a criterion to decide which representatives of $c_1(M)$ can be written in this form.
\bigskip

\setcounter{equation}{0}
\section{Proof of the Main Theorem}

By the results of \cite{ChP}, \cite{GL}, \cite{Zh} it suffices to obtain a uniform bound of $\varphi$ in the $L^{\infty}$ norm.    Indeed, by extending the second order estimate on $\varphi$ of Yau \cite{Y} (and Aubin \cite{A}),
Cherrier \cite{ChP} has shown, for general $\omega$, that a uniform $L^\infty$ bound on $\vp$ implies that the metric $\omega+\mn\de\db\vp$ is uniformly equivalent to $\omega$.   Moreover, generalizing Yau's third order estimate \cite{Y}, Cherrier shows that given this one can then bound $\omega+\mn\de\db\vp$ in $C^1$. Higher order estimates then follow from standard elliptic theory. A similar second order estimate was also proved by Guan-Li \cite{GL} and Zhang \cite{Zh} for general $\omega$, and sharpened in \cite{TW} in the cases of $n=2$ or $\omega$ balanced. It is also possible to avoid the third order estimate by using the Evans-Krylov theory, as in \cite{GL} and \cite{TW}.

We remark that our $L^{\infty}$ bound on $\varphi$ depends only on $(M, \omega)$ and $\sup_M F$, as in Yau's estimate for the K\"ahler case \cite{Y}.  In particular, the $L^{\infty}$ bound does not depend on $\inf_M F$.  
In the course of the proof, we say that a constant is \emph{uniform} if it depends only on the data $(M, \omega)$ and $\sup_M F$. We will often write such a constant as $C$, which may differ from line to line.  If we say that a constant depends only on a quantity $Q$ then we mean that it depends only on $Q$, $(M, \omega)$ and $\sup_M F$.

Our goal is thus to give a uniform bound for $\vp$. We begin with a lemma which can be found in  \cite{ChP}.  For the convenience of the reader, we provide a proof.  We use the notation of exterior products instead of the multilinear algebra calculations of \cite{ChP}.

\begin{lemma} \label{l1}
There are uniform constants $C, p_0$ such that for all $p\geq p_0$ we have
$$\int_M |\de e^{-\frac{p}{2}\vp}|_g^2\omega^n\leq Cp\int_M e^{-p\vp}\omega^n.$$
\end{lemma}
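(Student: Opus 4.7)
The plan is to integrate by parts against a positive form tailored to the Monge-Amp\`ere equation, and then to use Cauchy-Schwarz to absorb the Hermitian error terms coming from $d\omega\neq 0$. Set $\omega_\vp := \omega + \mn\de\db\vp$ and introduce the positive $(n-1,n-1)$-form
$$T := \sum_{k=0}^{n-1}\omega_\vp^k\wedge\omega^{n-1-k},$$
which satisfies the telescoping identity $\mn\de\db\vp\wedge T = \omega_\vp^n - \omega^n = (e^F-1)\omega^n$ by \eqref{ma}. Write $A_T := \int_M e^{-p\vp}\,\mn\de\vp\wedge\db\vp\wedge T \geq 0$. I would first apply Stokes' theorem to the exact top form $d(e^{-p\vp}\db\vp\wedge T)$; using $\db^2\vp = 0$ and $\db\vp\wedge\db\vp = 0$, and then multiplying by $\mn$, this yields
$$pA_T \;=\; \int_M e^{-p\vp}(e^F-1)\omega^n \;-\; E_T, \qquad E_T := \int_M e^{-p\vp}\,\mn\db\vp\wedge dT.$$
The first term on the right is uniformly bounded by $C\int_M e^{-p\vp}\omega^n$ since $|e^F-1|$ is uniformly bounded by $\sup_M F$.

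Next I would analyze the error $E_T$. Since $d(\mn\de\db\vp)=0$, one has $d\omega_\vp = d\omega$, so by Leibniz $dT = d\omega\wedge S$ for a positive $(n-2,n-2)$-form $S$ which is a polynomial in $\omega_\vp$ and $\omega$ (in fact $S = n\sum_{k=0}^{n-2}\omega_\vp^k\wedge\omega^{n-2-k}$, after collecting terms). The $(1,2)$-part $\db\omega$ of $d\omega$ wedges with $\db\vp \wedge S$ into bidegree $(n-1, n+1) = 0$, so only the $(2,1)$-part $\de\omega$ contributes and $E_T = \int_M e^{-p\vp}\,\mn\db\vp\wedge\de\omega\wedge S$. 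Since $T$ is a strictly positive $(n-1,n-1)$-form, the map $\alpha\mapsto \alpha\wedge T$ from $(1,0)$-forms to $(n,n-1)$-forms is an isomorphism, so I can write $\de\omega\wedge S = \eta\wedge T$ for a unique $(1,0)$-form $\eta$. Cauchy-Schwarz for the positive Hermitian form $(\alpha,\beta)\mapsto\mn\alpha\wedge\bar\beta\wedge T/\omega^n$ on $(1,0)$-forms then gives the pointwise bound
$$|\mn\db\vp\wedge\eta\wedge T|^2 \;\leq\; \bigl(\mn\de\vp\wedge\db\vp\wedge T\bigr)\bigl(\mn\eta\wedge\bar\eta\wedge T\bigr).$$
A direct computation in local coordinates diagonalizing $\omega$ and $\omega_\vp$ simultaneously shows $\mn\eta\wedge\bar\eta\wedge T \leq C\omega^n$ uniformly: the $T$-norm on $(1,0)$-forms is diagonal with entries $\prod_{k\neq i}(1+\lambda_k)$ in the eigenvalues $\lambda_i$ of $\omega_\vp$ with respect to $\omega$, and solving $\eta\wedge T = \de\omega\wedge S$ for the components of $\eta$ produces denominators that exactly cancel these entries, leaving only bounded components of $\de\omega$.

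Integrating this pointwise bound and applying Cauchy-Schwarz on the integrals yields $|E_T|^2 \leq C A_T\int_M e^{-p\vp}\omega^n$, hence by Young's inequality $|E_T|\leq \tfrac{p}{2}A_T + \tfrac{C}{p}\int_M e^{-p\vp}\omega^n$. Absorbing into the identity from the first paragraph gives $pA_T\leq C'\int_M e^{-p\vp}\omega^n$ for all $p\geq p_0$. Finally, since $T\geq\omega^{n-1}$ as positive forms,
$$\int_M |\de e^{-p\vp/2}|_g^2\,\omega^n \;=\; \tfrac{np^2}{4}\int_M e^{-p\vp}\,\mn\de\vp\wedge\db\vp\wedge\omega^{n-1}\;\leq\; \tfrac{np^2}{4}A_T \;=\; \tfrac{n}{4}\,p\,(pA_T)\;\leq\; Cp\int_M e^{-p\vp}\omega^n,$$
as claimed. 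The main obstacle is the pointwise estimate $\mn\eta\wedge\bar\eta\wedge T \leq C\omega^n$: both $\eta$ and $T$ depend nontrivially on $\omega_\vp$, and it is the precise structural cancellation in $\eta\wedge T = \de\omega\wedge S$ that delivers a bound uniform in $\vp$.
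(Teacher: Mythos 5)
There is a genuine gap in the central estimate. Your strategy agrees with the paper's up to the integration by parts, but then you claim to close the argument in one step via the pointwise bound $\mn\,\eta\wedge\bar\eta\wedge T \leq C\,\omega^n$, where $\eta$ is defined by $\eta\wedge T = \de\omega\wedge S$. This bound is false for $n\geq 3$. Work in coordinates where $\omega$ is the identity and $\omega_\vp = \mn\sum\lambda_i\,dz^i\wedge d\bar z^i$. The ``missing $d\bar z^i$'' component of $\eta\wedge T$ is (up to constants) $\eta_i\,T_i$ with $T_i$ a positive symmetric polynomial of degree $n-1$ in the $\lambda_j$ ($j\neq i$), while the corresponding component of $\de\omega\wedge S$ is a linear combination of the (bounded) coefficients of $\de\omega$ multiplied by coefficients $S_{ij}$ of $S$, which have degree $n-2$. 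Hence $|\eta_i|^2 T_i$ behaves like $S_{ij}^2/T_i$, of total degree $n-3 \ge 0$ for $n\geq 3$ --- and, more to the point, this ratio is not bounded when the $\lambda_j$ do not scale together. Concretely, for $n=3$ with $\lambda_2$ fixed, $h_{13\bar 1}=0$, $h_{23\bar 2}=1$, one gets $|\eta_3|^2 T_3 \sim (1+\lambda_1)^2/\bigl(2+\lambda_1+\lambda_2+2\lambda_1\lambda_2\bigr) \to \infty$ as $\lambda_1\to\infty$ (with $\lambda_3 = e^F/(\lambda_1\lambda_2)$ consistent with the equation). So ``the denominators exactly cancel'' only in dimension $n=2$, which is Cherrier's original case.

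This failure is precisely the difficulty the paper is designed to handle. After the same Cauchy--Schwarz (done with a parameter $\ve$), the error is bounded by $\ve C\sum_{k\ge 1}\int_M e^{-p\vp}\omega_\vp^k\wedge\omega^{n-k}$, and these mixed terms cannot be compared directly to $\int_M e^{-p\vp}\omega^n$ --- your pointwise claim is, in disguise, the assertion that they can. The paper therefore runs a finite induction (estimate \eqref{induct}), repeatedly peeling off one power of $\omega_\vp$ via a further integration by parts and absorbing the regenerated gradient term into half of $A_T$, until the range of $k$ is exhausted. Either you need to reproduce that inductive mechanism, or you must supply a different argument to control $\int_M e^{-p\vp}\omega_\vp^k\wedge\omega^{n-k}$ for $k\geq 1$; the one-shot pointwise bound as stated is not available.
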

\begin{proof}
From now on we will use the shorthand $\omega_\vp=\omega+\mn\de\db\vp$.
Let $\alpha$ be the $(n-1,n-1)$-form given by
$$\alpha=\sum_{k=0}^{n-1}\omega_\vp^k\wedge\omega^{n-k-1}.$$
We compute, using the equation (\ref{ma}) and integrating by parts,
\begin{eqnarray} \nonumber
C\int_M e^{-p\vp}\omega^n&\geq & \int_Me^{-p\vp}(\omega_\vp^n-\omega^n) \\ \nonumber
& = & \int_M e^{-p\vp}\mn\de\db\vp\wedge\alpha\\ 
&= & p\int_M e^{-p\vp}\mn\de\vp\wedge\db\vp\wedge\alpha+\int_M e^{-p\vp}\mn \, \db \vp\wedge\de\alpha. \qquad \label{1}
\end{eqnarray}
The first term on the right hand side of (\ref{1}) is positive, and we are going to use part of it to deal with the second one. Notice that
$$\de\alpha=n\sum_{k=0}^{n-2}\omega_\vp^k\wedge\omega^{n-k-2}\wedge\de\omega.$$
Since $\de\omega$ is a fixed tensor, there is a constant $C$ so that for any $\ve>0$ and any $k$ we have the following elementary pointwise inequality
\begin{eqnarray} \nonumber
\lefteqn{ \left|\frac{\mn\, \db \vp\wedge\de\omega\wedge \omega_\vp^k\wedge\omega^{n-k-2}}{\omega^n}\right| \,  } \\  && \quad \quad \label{useful} \leq
\frac{C}{\ve}\frac{\mn\de\vp\wedge\db\vp\wedge \omega_\vp^k\wedge\omega^{n-k-1}}{\omega^n}
+\ve C \frac{\omega_\vp^k\wedge\omega^{n-k}}{\omega^n},
\end{eqnarray}
that the reader can verify by choosing local coordinates at a point that make $\omega$ the identity and $\omega_\vp$ diagonal.
Applying (\ref{useful}) we have for any $\ve>0$ and any $p$,
\begin{eqnarray*}
-\int_M e^{-p\vp} \sqrt{-1}\, \ov{\partial} \vp \wedge \partial \alpha & = &- n  \sum_{k=0}^{n-2} \int_M e^{-p\vp} \sqrt{-1} \, \ov{\partial} \vp \wedge \omega_{\vp}^k \wedge \omega^{n-k-2} \wedge \partial \omega \\
& \le & \frac{C}{\ve} \sum_{k=0}^{n-2} \int_M e^{-p\vp} \sqrt{-1} \partial \vp \wedge \ov{\partial} \vp \wedge \omega_{\vp}^k \wedge \omega^{n-k-1} \\ && \mbox{} + \ve C \sum_{k=0}^{n-2} \int_M e^{-p\vp} \omega_{\vp}^k \wedge \omega^{n-k}.
\end{eqnarray*}
Now if we choose $p_0/2 \ge C/\ve$ we see that if $0 < \ve\le 1$ then for $p \ge p_0$,
\begin{eqnarray*}
-\int_M e^{-p\vp} \sqrt{-1} \, \ov{\partial} \vp \wedge \partial \alpha 
& \le & \frac{p}{2} \int_M e^{-p\vp} \sqrt{-1} \partial \vp \wedge \ov{\partial} \vp \wedge \alpha + C \int_M e^{-p\varphi} \omega^n \\
&& \mbox{} + \ve C \sum_{k=1}^{n-2} \int_M e^{-p\vp} \omega_{\vp}^k \wedge \omega^{n-k}.
\end{eqnarray*}
Combining this with (\ref{1}) we see that for any $0<\ve <1$ there exists $p_0$ depending only on $\ve$ such that for $p\ge p_0$,
\begin{equation} \label{i2}
\frac{p}{2} \int_M e^{-p\vp} \sqrt{-1} \partial \vp \wedge \ov{\partial} \varphi \wedge \alpha \le C \int_M e^{-p\varphi} \omega^n + \ve C \sum_{k=1}^{n-2} \int_M e^{-p\vp} \omega_{\vp}^k \wedge \omega^{n-k}.
\end{equation}

We now claim the following.  There exist uniform constants $C_2, \ldots, C_n$ and $\varepsilon_0$ such that for all $\ve$ with $0< \ve \le \ve_0$, there exists a constant $p_0$ depending only on $\ve$ such that for all $p \ge p_0$ we have for $i=2, \ldots, n$,
\begin{equation} \label{induct}
\frac{p}{2^{i-1}} \int_M e^{-p\varphi} \sqrt{-1} \partial \vp \wedge \ov{\partial} \vp \wedge \alpha \le C_i \int_M e^{-p\vp} \omega^n + \ve C_i \sum_{k=1}^{n-i} \int_M e^{-p\vp} \omega_{\vp}^k \wedge \omega^{n-k}.
\end{equation}
Given the claim, the lemma follows.  Indeed once we have the statement with $i=n$ then, fixing $\ve=\ve_0$ we have for $p \ge p_0$,
\begin{eqnarray*}
\int_M |\partial e^{-\frac{p}{2} \varphi} |^2_g \omega^n & = & \frac{np^2}{4} \int_M e^{-p \vp} \sqrt{-1} \partial \vp \wedge \ov{\partial} \vp \wedge \omega^{n-1} \\
& \le & \frac{np^2}{4} \int_M e^{-p \vp} \sqrt{-1} \partial \vp \wedge \ov{\partial} \vp \wedge \alpha \\
& \le & n2^{n-3} C_n p \int_M e^{-p\vp} \omega^n,
\end{eqnarray*}
as required.

We will prove the claim by induction on $i$.  By (\ref{i2}) we have already proved the statement for $i=2$.  
So we assume the induction statement (\ref{induct}) for $i$, and prove it for $i+1$.
We compute
\begin{equation}\begin{split} \label{a1a2}
\ve C_i\sum_{k=1}^{n-i}\int_M e^{-p\vp}\omega_\vp^k\wedge\omega^{n-k} & =  \ve C_i\sum_{k=1}^{n-i}\int_M e^{-p\vp}\omega_\vp^{k-1}\wedge\omega^{n-k+1}\\
& \ \ \, + \ve C_i\sum_{k=1}^{n-i}\int_M e^{-p\vp}\mn\de\db\vp\wedge\omega_\vp^{k-1}\wedge\omega^{n-k}\\
 & = A_1 + A_2,
\end{split}\end{equation}
where 
\begin{equation*}
A_1 = \ve C_i\sum_{k=0}^{n-i-1}\int_M e^{-p\vp}\omega_\vp^{k}\wedge\omega^{n-k}, \quad A_2 = \ve C_i\sum_{k=0}^{n-i-1}\int_M e^{-p\vp}\mn\de\db\vp\wedge\omega_\vp^{k}\wedge\omega^{n-k-1}.
\end{equation*}

The term $A_1$ is already acceptable for the induction. For $A_2$ we integrate by parts to obtain
\begin{equation}\begin{split} \label{b123}
A_2 &= \ve C_ip\sum_{k=0}^{n-i-1}\int_M e^{-p\vp}\mn\de\vp\wedge\db\vp\wedge\omega_\vp^{k}\wedge\omega^{n-k-1}\\
& \ \ \, +\ve  C_i\sum_{k=1}^{n-i-1} k \int_M e^{-p\vp} \mn\, \db\vp\wedge \omega_\vp^{k-1}\wedge\omega^{n-k-1}\wedge\de\omega \\
& \ \ \, +\ve  C_i\sum_{k=0}^{n-i-1}(n-k-1) \int_M e^{-p\vp} \mn\, \db\vp\wedge \omega_\vp^{k}\wedge\omega^{n-k-2}\wedge\de\omega \\
 &=  B_1 + B_2 + B_3,
\end{split}\end{equation}
where
\begin{equation*} \begin{split}
& B_1 = \ve C_ip\sum_{k=0}^{n-i-1}\int_M e^{-p\vp}\mn\de\vp\wedge\db\vp\wedge\omega_\vp^{k}\wedge\omega^{n-k-1} \\
& B_2 = \ve  C_i\sum_{k=0}^{n-i-2} (k+1) \int_M e^{-p\vp} \mn\, \db\vp\wedge \omega_\vp^{k}\wedge\omega^{n-k-2}\wedge\de\omega \\
& B_3 = \ve  C_i\sum_{k=0}^{n-i-1}(n-k-1) \int_M e^{-p\vp} \mn\, \db\vp\wedge \omega_\vp^{k}\wedge\omega^{n-k-2}\wedge\de\omega.
\end{split}\end{equation*}
Choosing $\varepsilon_0$ such that  $\varepsilon_0 C_i<  2^{-i-1}$ we have for $\ve< \ve_0$ and $p \ge p_0$,
\begin{equation} \label{b1}
B_1 \le \frac{p}{2^{i+1}} \int_M e^{-p\varphi} \sqrt{-1} \partial \varphi \wedge \ov{\partial} \vp \wedge \alpha.
\end{equation}
For the terms $B_2$ and $B_3$ we use again \eqref{useful} to obtain
\begin{equation} \begin{split} \label{b23}
B_2 + B_3 & \le nC_i C\sum_{k=0}^{n-i-1}\int_M e^{-p\vp} \mn\de\vp\wedge\db\vp\wedge \omega_\vp^k\wedge\omega^{n-k-1} \\
& \ \ \, + \ve^2 n C_i C \sum_{k=0}^{n-i-1}\int_M e^{-p\vp} \omega_\vp^k\wedge\omega^{n-k}.
\end{split} \end{equation}
Notice that the second term on the right hand side of (\ref{b23}) is acceptable for the induction.  Moreover, we may assume that $p_0 \ge 2^{i+1} nC_i C $ and thus for $p \ge p_0$,
\begin{equation} \begin{split} \label{b232}
B_2 + B_3 & \le \frac{p}{2^{i+1}} \int_M e^{-p\varphi} \sqrt{-1} \partial \varphi \wedge \ov{\partial} \vp \wedge \alpha \\
& \ \ \, + \ve^2 n C_i C \sum_{k=0}^{n-i-1}\int_M e^{-p\vp} \omega_\vp^k\wedge\omega^{n-k}.
\end{split} \end{equation}

Combining the inductive hypothesis (\ref{induct}) with (\ref{a1a2}), (\ref{b123}), (\ref{b1}), (\ref{b232}) we obtain for $p \ge p_0$,
\begin{equation} \label{induct2}
\frac{p}{2^{i}} \int_M e^{-p\varphi} \sqrt{-1} \partial \vp \wedge \ov{\partial} \vp \wedge \alpha \le C_{i+1} \int_M e^{-p\vp} \omega^n + \ve C_{i+1} \sum_{k=1}^{n-i-1} \int_M e^{-p\vp} \omega_{\vp}^k \wedge \omega^{n-k},
\end{equation}
completing the inductive step.  This finishes the proof of the claim and thus the lemma.
\end{proof}

We now complete the proof of the Main Theorem.
Using Lemma \ref{l1} and the Sobolev inequality, we have for $\beta=\frac{n}{n-1}>1$,
\begin{equation*}\begin{split}
\left(\int_M e^{-p\beta \vp}\omega^n\right)^{1/\beta}&\leq C\left(
\int_M|\de e^{-\frac{p}{2}\vp}|^2\omega^n+ \int_M e^{-p\vp}\omega^n\right)\\
&\leq Cp\int_M e^{-p\vp}\omega^n,
\end{split}\end{equation*}
for all $p \ge p_0$.  Thus
$$\|e^{-\vp}\|_{L^{p\beta}}\leq C^{1/p}p^{1/p}\|e^{-\vp}\|_{L^p}.$$
Since this holds for all $p\geq p_0$, we can iterate this estimate in a standard way to obtain
$$\|e^{-\vp}\|_{L^{\infty}}\leq C \|e^{-\vp}\|_{L^{p_0}},$$
which is equivalent to
$$e^{-p_0\inf_M\vp}\leq C\int_M e^{-p_0\vp}\omega^n.$$

We now make use of a result from \cite{TW}:

\begin{lemma}  Let $f$ be a smooth function on $(M, \omega)$.  Write $d\mu = \omega^n/\int_M \omega^n$.  If there exists a constant $C_1$ such that
\begin{equation}
e^{-\inf_M f} \le e^{C_1} \int_M e^{-f} d\mu,
\end{equation}
then
\begin{equation}
| \{ f \le \inf_M f + C_1 +1 \} | \ge \frac{e^{-C_1}}{4},
\end{equation}
where $| \,  \cdot \, |$ denotes the volume of the set with respect to $d \mu$.
\end{lemma}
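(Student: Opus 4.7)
The plan is to use a simple split-integral (layer-cake style) argument: bound the integral $\int_M e^{-f}\, d\mu$ from above by splitting $M$ into the sublevel set $S=\{f\leq \inf_M f+C_1+1\}$ and its complement, and then compare the resulting upper bound with the given lower bound $e^{-C_1}$.

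First I would normalize. Setting $g=f-\inf_M f$, the hypothesis $e^{-\inf_M f}\leq e^{C_1}\int_M e^{-f}\, d\mu$ rewrites as $\int_M e^{-g}\, d\mu \geq e^{-C_1}$, while the conclusion becomes $|\{g\leq C_1+1\}|\geq e^{-C_1}/4$. So we may as well assume from the start that $\inf_M f=0$, with the hypothesis $\int_M e^{-f}\, d\mu \geq e^{-C_1}$ in hand.

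Next I would estimate each piece of the split. On $S$ the bound $f\geq 0$ gives $e^{-f}\leq 1$ pointwise, so $\int_S e^{-f}\, d\mu \leq |S|$. On the complement $\{f>C_1+1\}$, the pointwise bound $e^{-f}<e^{-(C_1+1)}$ together with $\mu(M)=1$ controls the tail integral by $e^{-(C_1+1)}$. Adding the two estimates and comparing with the hypothesis yields
$$e^{-C_1}\leq |S|+e^{-(C_1+1)},$$
and therefore $|S|\geq e^{-C_1}(1-e^{-1})$, which is comfortably larger than $e^{-C_1}/4$.

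There is really no obstacle in the argument; the only calibration is the choice of the threshold shift $+1$, which is exactly what makes the tail contribution strictly smaller than the hypothesized lower bound. The constant $1/4$ in the statement is a convenient round number that leaves slack, since the argument in fact delivers the sharper constant $1-e^{-1}$.
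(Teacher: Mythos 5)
Your proof is correct. The paper does not reproduce the argument for this lemma, citing only \cite[Lemma 3.2]{TW}, so there is no in-text proof to compare against; but your split-integral argument (decomposing $\int_M e^{-f}\,d\mu$ over $\{f\le \inf f+C_1+1\}$ and its complement, using $e^{-f}\le 1$ on the sublevel set and $e^{-f}<e^{-(C_1+1)}$ on the tail together with $\mu(M)=1$) is the natural Chebyshev-style proof and is certainly the intended one. Your observation that the method actually yields the sharper constant $1-e^{-1}$ in place of $\tfrac14$ is also correct; $\tfrac14$ is simply a convenient round number. One small remark worth making explicit: since $f-\inf_M f\ge 0$ and $\mu$ is a probability measure, the hypothesis forces $e^{-C_1}\le 1$, i.e.\ $C_1\ge 0$, so the stated lower bound on the measure never exceeds $1$ and the conclusion is never vacuous.
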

\begin{proof} See  \cite[Lemma 3.2]{TW}.
\end{proof}

Applying this lemma to $f=p_0 \varphi$ we see that there exist uniform constants $C,\delta>0$ so that
\begin{equation} \label{set}
|\{\vp\leq \inf_M\vp+C\}|\geq\delta.
\end{equation}
We remark that, in \cite{TW}, the bound (\ref{set}) is established whenever one has the improved second order estimate,
\begin{equation} \label{improve}
\tr{\omega}{\omega_{\varphi}} \le C e^{A(\varphi- \inf_M \varphi)},
\end{equation}
for uniform $A$ and $C$.  It is shown in \cite{TW} that (\ref{improve}) holds if $n=2$ or $\omega$ is balanced.

The $L^{\infty}$ bound on $\varphi$, and hence the Main Theorem, now follow from the arguments of \cite{TW}.   However, we include an outline of these arguments for the reader's convenience.
Recall that, from \cite{Ga}, if $(M, \omega)$ is a compact Hermitian manifold then there exists a unique smooth function $u: M \rightarrow \mathbb{R}$ with $\sup_M u=0$ such that the metric 
$\omega_{\textrm{G}} = e^u \omega$ is \emph{Gauduchon}, that is, satisfies
\begin{equation} \label{gaud}
\partial \ov{\partial} (\omega_{\textrm{G}}^{n-1})=0.
\end{equation}
Writing $\Delta_{\textrm{G}}$ for the complex Laplacian associated to $\omega_{\textrm{G}}$ (which differs from the Levi-Civita Laplacian in general),
we have the following lemma.

\begin{lemma} \label{lemmapsi} Let $M$ be a compact complex manifold of complex dimension $n$ with a Gauduchon metric $\omega_{\emph{G}}$.  If $\psi$ is a smooth nonnegative function on $M$ with
$$\Delta_{\emph{G}} \psi \ge -C_0$$ 
then there exist constants $C_1$ and $C_2$ depending only on $(M, \omega_{\emph{G}})$ and $C_0$ such that:
\begin{equation} \label{psi1}
\int_M |\partial \psi^{\frac{p+1}{2}}|_{\omega_{\emph{G}}}^2 \omega_{\emph{G}}^n \le C_1 p \int_M \psi^p \omega_{\emph{G}}^n \quad \textrm{for all } p \ge 1,
\end{equation}
and
\begin{equation} \label{psi2}
\sup_M \psi \le C_2  \max \left\{  \int_M \psi \, \omega_{\emph{G}}^n, 1 \right\}.
\end{equation}
\end{lemma}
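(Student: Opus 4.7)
My plan is to treat the two conclusions in order. Inequality (\ref{psi1}) will be established by testing the differential inequality $\Delta_G\psi\geq -C_0$ against the function $\psi^p$ and integrating by parts, with the Gauduchon condition $\de\db(\omega_G^{n-1})=0$ playing the key role in eliminating an error term. Inequality (\ref{psi2}) will then follow from (\ref{psi1}) by combining it with the Sobolev inequality on $(M,\omega_G)$ and performing a standard Moser iteration.

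For (\ref{psi1}): rewrite the hypothesis as the pointwise $(n,n)$-form inequality $\mn\de\db\psi\wedge\omega_G^{n-1}\geq -C\,\omega_G^n$ (up to a dimensional constant) and integrate its product with $\psi^p$ over $M$. Integration by parts in $\de$ produces
\[
\int_M\psi^p\,\mn\de\db\psi\wedge\omega_G^{n-1} = -p\int_M\psi^{p-1}\,\mn\de\psi\wedge\db\psi\wedge\omega_G^{n-1} + \int_M\psi^p\,\mn\db\psi\wedge\de\omega_G^{n-1},
\]
and the second term is the potentially dangerous error coming from $\de\omega_G^{n-1}\neq 0$; this is the main obstacle. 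The trick is to rewrite $\psi^p\db\psi=\tfrac{1}{p+1}\db(\psi^{p+1})$ and integrate by parts once more, moving $\db$ onto $\de\omega_G^{n-1}$; the error term then becomes a multiple of $\int_M\psi^{p+1}\de\db\omega_G^{n-1}$, which vanishes identically by the Gauduchon hypothesis. Substituting back, using the identity $\psi^{p-1}|\de\psi|^2_{\omega_G}=\tfrac{4}{(p+1)^2}|\de\psi^{(p+1)/2}|^2_{\omega_G}$, and the bound $p+1\leq 2p$ for $p\geq 1$, gives (\ref{psi1}).

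For (\ref{psi2}): replace $\psi$ by $\tilde\psi:=\psi+1$, which still satisfies $\Delta_G\tilde\psi\geq -C_0$ and in addition obeys $\tilde\psi\geq 1$ pointwise. The Sobolev inequality on $(M,\omega_G)$ applied to $\tilde\psi^{(p+1)/2}$ gives, with $\beta=n/(n-1)$,
\[
\Big(\int_M\tilde\psi^{(p+1)\beta}\omega_G^n\Big)^{1/\beta}\leq C\Big(\int_M|\de\tilde\psi^{(p+1)/2}|^2_{\omega_G}\omega_G^n+\int_M\tilde\psi^{p+1}\omega_G^n\Big),
\]
and by (\ref{psi1}) applied to $\tilde\psi$, together with the pointwise bound $\tilde\psi^{p-1}\leq\tilde\psi^{p+1}$, the right-hand side is at most $Cp\int_M\tilde\psi^{p+1}\omega_G^n$. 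Iterating the resulting reverse Hölder-type estimate $\|\tilde\psi\|_{L^{(p+1)\beta}}\leq(Cp)^{1/(p+1)}\|\tilde\psi\|_{L^{p+1}}$ from $p+1=2$ along the geometric sequence $2\beta^k$ gives $\sup_M\tilde\psi\leq C\|\tilde\psi\|_{L^2}$; a final interpolation $\|\tilde\psi\|_{L^2}^2\leq\sup_M\tilde\psi\cdot\int_M\tilde\psi\,\omega_G^n$ and absorbing yields $\sup_M\psi\leq\sup_M\tilde\psi\leq C\max\{\int_M\psi\,\omega_G^n,1\}$, which is (\ref{psi2}).
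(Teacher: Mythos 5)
Your proof is correct and follows essentially the same approach as the paper's cited source \cite[Lemma 3.4]{TW}: multiply the differential inequality by $\psi^p$, integrate by parts twice so that the error term lands on $\partial\ov{\partial}\omega_{\textrm{G}}^{n-1}$ and vanishes by the Gauduchon condition, and then run a standard Moser iteration (with the shift $\tilde\psi = \psi+1$) using the Sobolev inequality to deduce the $L^\infty$ bound from the $L^1$ bound. The only slip is cosmetic: to bound the Sobolev right-hand side you need $\tilde\psi^p \le \tilde\psi^{p+1}$ (not $\tilde\psi^{p-1}\le\tilde\psi^{p+1}$), but both hold since $\tilde\psi\ge1$, so nothing breaks.
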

\begin{proof} Although \cite[Lemma 3.4]{TW} is stated for complex dimension $2$, the same proof works for any dimension.
\end{proof}

We apply Lemma \ref{lemmapsi} to the function
$\psi=\vp-\inf_M\vp$,
which satisfies
$\Delta_{\textrm{G}} \psi = e^{-u} \Delta \psi > -C$, where $\Delta$ is the complex Laplacian with respect to $\omega$.
In light of \eqref{psi2}, once we bound the $L^1$ norm of $\psi$ the Main Theorem follows.
Denoting by $\underline{\psi}$ the average of $\psi$ with respect to
$\omega_{\textrm{G}}^n$ we obtain from the Poincar\'e inequality and (\ref{psi1}) with $p=1$,
\begin{equation} \label{poincare}
\|\psi-\underline{\psi}\|_{L^2}\leq C \left( \int_M |\partial \psi |_{\omega_{\textrm{G}}}^2 \omega_{\textrm{G}}^n \right)^{1/2}  \leq C\|\psi\|^{1/2}_{L^1}.
\end{equation}
In (\ref{poincare}) and the following we are using $L^q$ norms with respect to the volume form $\omega_{\textrm{G}}^n$, which are equivalent to $L^q$ norms with respect to $d\mu$.
Using (\ref{set}) we see that the set 
 $S:=\{  \psi \leq C \}$ satisfies
$|S|_{\textrm{G}} \ge \delta$
 for a uniform $\delta>0$, where $| \cdot |_{\textrm{G}}$ denotes the volume of a set with respect to $\omega_{\textrm{G}}^n$. 
Hence
$$\frac{\delta}{\int_M \omega_{\textrm{G}}^n} \int_M\psi \omega_{\textrm{G}}^n =\delta \underline{\psi}\leq  \int_S \underline{\psi} \omega_{\textrm{G}}^n  \leq 
\int_S (|\psi-\underline{\psi}|+C)\omega_{\textrm{G}}^n \leq  \int_M|\psi-\underline{\psi}| \omega_{\textrm{G}}^n +C.$$
Then, 
\begin{eqnarray*}
\|\psi \|_{L^1}
 & \leq & C( \|\psi-\underline{\psi}\|_{L^1}+1)  \leq    C( \|\psi-\underline{\psi}\|_{L^2}+1)  \le  C ( \| \psi \|_{L^1}^{1/2} +1),
\end{eqnarray*}
which shows that $\psi$ is uniformly bounded in $L^1$.  This  completes the proof of the Main Theorem.

Finally we mention that corollary 1 follows from the argument of Cherrier \cite{ChP}, which uses results from \cite{De}.  Or for another proof, see    \cite[Corollary 1]{TW}.

\bigskip
\noindent
{\bf Acknowledgements} \ 
The authors  thank S.-T. Yau for many useful discussions over the last few years on the complex Monge-Amp\`ere equation. The authors also express their gratitude to D.H. Phong for his support, encouragement and helpful suggestions.  In addition the authors thank G. Sz\'ekelyhidi for some helpful discussions.

\bigskip
\noindent
Mathematics Department, Columbia University, 2990 Broadway, New York, NY 10027

\bigskip
\noindent
Mathematics Department, University of California, San Diego, 9500 Gilman Drive \#0112, La Jolla CA 92093


\begin{thebibliography}{99}
\bibitem[Au]{A} Aubin, T. {\em \'Equations du type Monge-Amp\`ere sur les vari\'et\'es k\"ahl\'eriennes compactes}, Bull. Sci. Math. (2) {\bf 102} (1978), no. 1, 63--95.
\bibitem[Ca]{C} Calabi, E. {\em On K\"ahler manifolds with vanishing canonical class}, in {\em Algebraic geometry and topology. A symposium in honor of S. Lefschetz},  pp. 78--89. Princeton University Press, Princeton, N. J., 1957. 
\bibitem[Ch]{ChP} Cherrier, P. {\em \'Equations de Monge-Amp\`ere sur les vari\'et\'es Hermitiennes compactes}, Bull. Sc. Math (2) {\bf 111} (1987), 343--385.
\bibitem[De]{De} Delano\"e, P. {\em \'Equations du type de Monge-Amp\`ere sur les vari\'et\'es Riemanniennes compactes, II}, J. Functional Anal. {\bf 41} (1981), 341--353.
\bibitem[G1]{Ga} Gauduchon, P. {\em Le th\'eor\`eme de l'excentricit\'e nulle}, C. R. Acad. Sci. Paris {\bf 285} (1977), 387--390.
\bibitem[G2]{Ga2} Gauduchon, P. {\em La $1$-forme de torsion d'une vari\`et\`e hermitienne compacte}, Math. Ann. {\bf 267} (1984), no. 4, 495--518. 
\bibitem[GL]{GL} Guan, B., Li, Q. {\em Complex Monge-Amp\`ere equations on Hermitian manifolds}, preprint, arXiv:0906.3548.
\bibitem[Ha]{Ha} Hanani, A. {\em \'Equations du type de Monge-Amp\`ere sur les vari\'et\'es hermitiennes compactes}, J. Funct. Anal. {\bf 137} (1996), no. 1, 49--75. 
\bibitem[TW]{TW} Tosatti, V., Weinkove, B. {\em Estimates for the complex Monge-Amp\`ere equation on Hermitian and balanced manifolds}, preprint, arXiv:0909.4496.
\bibitem[Ya]{Y} Yau, S.-T. {\em On the Ricci curvature of a compact K\"ahler
manifold and the complex Monge-Amp\`ere equation, I}, Comm. Pure Appl. Math.
{\bf 31} (1978), no.3, 339--411.
\bibitem[Zh]{Zh} Zhang, X. {\em A priori estimates for complex Monge-Amp\`ere equation on Hermitian manifolds}, preprint 2009.
\end{thebibliography}
\end{document}